\documentclass{article}
\usepackage{arxiv}

\usepackage[utf8]{inputenc}
\usepackage[T1]{fontenc}
\usepackage[numbers]{natbib}
\usepackage{hyperref}
\usepackage{url}
\usepackage{booktabs}
\usepackage{amsfonts}
\usepackage{amsmath}
\usepackage{amssymb}
\usepackage{amsthm}
\usepackage{microtype}
\usepackage{graphicx}
\usepackage{xcolor}
\usepackage{multirow}
\usepackage{tikz}
\usetikzlibrary{svg.path}

\newtheorem{theorem}{Theorem}[section]
\newtheorem{definition}[theorem]{Definition}
\newtheorem{corollary}[theorem]{Corollary}

\renewcommand{\shorttitle}{Tensor Decomposition of Key-Value Caches}
\renewcommand{\undertitle}{}

\definecolor{orcidlogocol}{HTML}{A6CE39}
\tikzset{orcidlogo/.pic={
  \fill[orcidlogocol] svg{M256,128c0,70.7-57.3,128-128,128C57.3,256,0,198.7,0,128C0,57.3,57.3,0,128,0C198.7,0,256,57.3,256,128z};
  \fill[white] svg{M86.3,186.2H70.9V79.1h15.4v48.4V186.2z}
    svg{M108.9,79.1h41.6c39.6,0,57,28.3,57,53.6c0,27.5-21.5,53.6-56.8,53.6h-41.8V79.1z M124.3,172.4h24.5c34.9,0,42.9-26.5,42.9-39.7c0-21.5-13.7-39.7-43.7-39.7h-23.7V172.4z};
  \fill[white] svg{M88.7,56.8c0,5.5-4.5,10.1-10.1,10.1c-5.6,0-10.1-4.6-10.1-10.1c0-5.6,4.5-10.1,10.1-10.1C84.2,46.7,88.7,51.3,88.7,56.8z};
}}
\newcommand{\orcidlink}[1]{\href{https://orcid.org/#1}{\,\raisebox{-0.15ex}{\resizebox{!}{0.85em}{\begin{tikzpicture}[yscale=-1]\pic{orcidlogo};\end{tikzpicture}}}}}

\title{Tensor Decomposition of Transformer Key-Value Caches:
Spectral Structure and Format Comparison}

\author{%
  Rahul Krishnan\,\orcidlink{0009-0000-4542-7989} \\
  Universit\"at Trier
  \And
  Volker Schulz\,\orcidlink{0000-0001-7665-130X} \\
  Fachbereich IV, Mathematik, Universit\"at Trier
}

\begin{document}
\maketitle

\begin{abstract}

The key-value (KV) cache of autoregressive transformers can be viewed
as a fourth-order tensor spanning attention heads, tokens, features,
and grouped layers.  We measure the singular-value spectra of all four
mode unfoldings on Mistral-7B-v0.3 and LLaMA-2-13B and compare four
standard tensor decompositions: Tucker, CP, tensor train, and
t-SVD, at matched storage.  The spectra partition the four axes into
two classes.  The token and feature modes carry low-rank structure,
particularly for keys.  The head and layer modes are nearly full-rank
and resist compression at any practical error level.  Among the four
decompositions, Tucker achieves the lowest reconstruction error at
every compression ratio from $2\times$ to $5\times$, because it can
leave the full-rank modes untouched.  Comparisons with two-dimensional
unfolding baselines show that the preferred representation differs
between keys and values: 2D methods achieve lower key error, while
four-way Tucker achieves lower value error at matched storage.  A
mode-pinning theorem certifies the full-rank preservation from the
measured spectra alone.  Two further spectral properties affect the compressible modes
without touching the full-rank ones: values reach a higher error floor than keys at every
ratio, and post-RoPE keys lose $41\%$--$64\%$ of their pre-RoPE
compressibility on both models.
\end{abstract}

\keywords{tensor decomposition \and Tucker decomposition \and higher-order singular value decomposition \and low-rank approximation \and KV cache \and rotary position embedding \and transformer inference}

\section{Introduction}
\label{sec:intro}

Autoregressive transformers~\cite{vaswani2017attention} generate output
tokens one at a time.  At each step and each layer, the attention value or output is a weighted combination
of all previous tokens' value vectors, with weights determined by inner
products between the current token's query vector and all previous key
vectors.  Computing this output therefore requires the key and value
projections of every previous token at every layer.  Rather than
recomputing them, inference implementations store them for
reuse~\cite{shazeer2019multiquery}.  This stored pair of key and value
matrices is called the \emph{key-value (KV) cache}.  It grows linearly
in the batch size, the context length, the number of layers, and the
number and dimension of the attention heads.  Once contexts are long or
batches are large, it is the cache that fills
device memory and limits throughput~\cite{kwon2023pagedattention}.
Low-rank approximations computed via truncated
SVD~\cite{kolda2009tensor} are used by existing
methods~\cite{chang2024palu,chang2025xkv} to compress this cache.

At each layer~$\ell$, the KV cache after~$T$ tokens is the pair
$(\mathcal{K}_\ell,\, \mathcal{V}_\ell)$ of third-order tensors
\[
  \mathcal{K}_\ell \in \mathbb{R}^{n_h \times T \times d_h}, \quad
  \mathcal{V}_\ell \in \mathbb{R}^{n_h \times T \times d_h}.
\]
where~$n_h$ is the number of key-value heads, $T$~is the input prompt sequence
length, and~$d_h$ is the per-head feature dimension.  We refer to the
three axes as the head, token, and feature modes.  Stacking adjacent
layers adds a fourth, the layer mode.  The multi-linear structure of
the KV cache has not been systematically studied.

Multi-linear algebra offers several standard tensor decomposition methods to exploit correlations across all of these axes at once~\citep{kolda2009tensor}: Tucker~\citep{tucker1966some,delathauwer2000hosvd}, CP~\citep{carroll1970cp,harshman1970parafac}, tensor train~\citep{oseledets2011tt}, and t-SVD~\citep{kilmer2011tsvd}.
These decompositions have a long record on the \emph{weights} of a neural network: convolution kernels have been factored in CP and Tucker form~\citep{lebedev2015cp,kim2016tucker} and fully connected layers stored as tensor trains~\citep{novikov2015tensorizing}, with the rank of every mode chosen by hand, by a global tolerance or by a per-mode estimator such as VBMF~\citep{kim2016tucker}, rather than allocated under a storage budget.
The KV cache differs from weight tensors as it is regenerated with every new prompt, and the token mode is typically much larger than head mode. Whether the standard decompositions remain effective on the KV Cache tensor has not been structurally studied.
We investigate the compressibility of each tensor mode of the KV cache and compares the reconstruction performance of the four standard tensor decompositions on it.  We focus on the multi-linear structure and reconstruction properties.

We ask that structural question directly.
Which modes of the KV tensor carry compressible structure, which do not, and what does the answer imply for the choice of decomposition?
We extract real caches from two production models, Mistral-7B-v0.3~\citep{jiang2023mistral}, a grouped-query attention~\citep{ainslie2023gqa} model, and LLaMA-2-13B~\citep{touvron2023llama2}, a multi-head attention~\citep{vaswani2017attention} model, read off the singular-value spectrum of every mode unfolding, and fit the four decompositions at matched storage across all layers and repeated input draws.
This yields a mode-count dichotomy: the token and feature modes carry compressible low-rank structure, whereas the head and grouped-layer modes are short and exhibit little spectral decay. We call the latter \emph{index-like} (Definition~\ref{def:index-like}).

\paragraph{Contributions}

\begin{enumerate}
    \item A systematic mode-wise spectral characterization of real KV tensors on two production models, together with a mode-$k$ tail-energy lower bound that relates the observed forced-rank errors to the corresponding singular-value tails (Sections~\ref{sec:spectral} and~\ref{sec:rL}). 
    \item  A matched-storage comparison of Tucker, CP, tensor train, and t-SVD, and of two-dimensional cache unfoldings against the four-mode Tucker (Sections~\ref{sec:formats} and~\ref{sec:unfoldings}), including a key/value split that is not visible in any single format.
    \item A formal statement of the mode-count dichotomy for the Lagrangian rank allocator (Theorem~\ref{thm:pinning} and Corollary~\ref{cor:tightness}, Section~\ref{sec:certificate}) that certifies mode pinning from the measured spectra alone.
    \item  A controlled study of the K/V asymmetry and the pre/post rotary-embedding penalty as spectral properties of the cache rather than of any single decomposition (Sections~\ref{sec:kv} and~\ref{sec:rope}).

\end{enumerate}

The rest of the paper is organized as follows.
Section~\ref{sec:background} fixes notation and recalls the four decompositions.
Section~\ref{sec:spectral} measures the per-mode singular-value spectra.
Section~\ref{sec:formats} compares the four formats at matched storage.
Section~\ref{sec:unfoldings} tests the small-count hypothesis against the two-dimensional unfoldings and against a free four-mode allocator with a scripted rank ablation.
Sections~\ref{sec:kv} and~\ref{sec:rope} isolate the K/V asymmetry and the pre/post rotary-embedding penalty.  Section~\ref{sec:conclusion} concludes.

\section{Background}
\label{sec:background}

\subsection{Autoregressive Transformers}
\label{sec:kv-cache-background}
\paragraph{The KV Cache}
At layer~$\ell$ and position~$t$ in a transformer model, the input
$x_t^{(\ell)} \in \mathbb{R}^d$ is projected into a query
$q_t^h = W_Q^{(\ell),h}\, x_t^{(\ell)}$, a key
$k_t^h = W_K^{(\ell),h}\, x_t^{(\ell)}$, and a value
$v_t^h = W_V^{(\ell),h}\, x_t^{(\ell)}$, all in~$\mathbb{R}^{d_h}$,
where $d_h$~the per-head feature dimension.  This projection is applied~$n_h$ times in parallel, each with
independent weight matrices. Each such copy is called an
\emph{attention head}, and~$h \in \{1,\ldots,n_h\}$ indexes it. The attention output at head~$h$ and
position~$t$ is
\begin{equation*}
  a_t^h \;=\; V_{1:t}^{h\!\top}\,
  \operatorname{softmax}\!\Big(
    K_{1:t}^{h}\, q_t^h \big/ \sqrt{d_h}
  \Big),
\end{equation*}
where~$K_{1:t}^h$ and~$V_{1:t}^h$ stack the keys and values of all
positions up to~$t$.  The query is computed from the current token
alone and is not cached.

We focus on two attention architectures: multi-head attention
(MHA)~\cite{vaswani2017attention} and grouped-query attention
(GQA)~\cite{ainslie2023gqa}.  Under MHA each query head has its own
key-value head.  Under GQA, multiple query heads share a single
key-value head, so the key-value head count~$n_h$ is smaller than the
query-head count. Throughout this paper, $n_h$~denotes the key-value
head count.

\paragraph{Rotary position embedding}
The attention equation above is permutation-invariant: it scores each
past key against the current query regardless of where that key
appeared in the sequence. To make the output sensitive to word order,
position information must be injected into the keys and queries.
\emph{Rotary Position Embedding} (RoPE)~\citep{su2021roformer} does
so by rotating each key and query vector by an angle that depends on
its position in the sequence. The $d_h$ components of each head are
grouped into $d_h/2$ coordinate pairs, and each pair is independently
rotated by a position-dependent angle, making the full operator $R_t$
block-diagonal and orthogonal. The values are not rotated. Since
$R_t$ depends on the position $t$, applying it changes the
singular-value spectrum of the key cache. \emph{Pre-RoPE} stores the
unrotated key $k_t^h$ and applies $R_t$ at read time.
\emph{Post-RoPE} stores the rotated key
$\tilde{k}_t^h = R_t\, k_t^h$ directly.

\subsection{Tensor Decomposition formats}

\paragraph{Notation}
For a tensor $\mathcal{Y} \in \mathbb{R}^{n_1 \times \cdots \times n_d}$,
the mode-$k$ unfolding $\mathcal{Y}_{(k)} \in \mathbb{R}^{n_k \times \prod_{j \neq k} n_j}$
is the matrix obtained by reshaping $\mathcal{Y}$ so that mode $k$
indexes the rows and all remaining modes are collapsed into the columns.
The mode-$k$ product $\mathcal{Y} \times_k M$ with
$M \in \mathbb{R}^{m \times n_k}$ is defined by
$(\mathcal{Y} \times_k M)_{(k)} = M\,\mathcal{Y}_{(k)}$.
Throughout, approximation quality is the relative Frobenius error
$\|\mathcal{Y}-\tilde{\mathcal{Y}}\|_F / \|\mathcal{Y}\|_F$, where $\tilde{\mathcal{Y}}$ is the reconstructed approximation,
averaged over all layers unless stated otherwise, and compression
is the ratio of uncompressed to stored scalar counts.

\paragraph{Tensor-decomposition algorithms} We compare four standard low-rank tensor formats, each based on a different notion of tensor rank~\citep{kolda2009tensor}.  For a third-order tensor $\mathcal X \in \mathbb R^{n_h \times T \times d_h}$ their approximations take the following forms.
\begin{itemize}
\item \textbf{Tucker} represents a tensor by a smaller core
$\mathcal{G} \in \mathbb{R}^{r_1 \times r_2 \times r_3}$ and one
factor matrix $U^{(k)} \in \mathbb{R}^{n_k \times r_k}$ per mode,
with independently chosen multilinear ranks $(r_1, r_2, r_3)$:
\[
  \mathcal{X} \;\approx\; \mathcal{G} \times_1 U^{(1)}
  \times_2 U^{(2)} \times_3 U^{(3)}
\]
The factors are initialized by the higher-order singular value
decomposition (HOSVD)~\citep{tucker1966some,delathauwer2000hosvd} and refined by alternating least squares
(HOOI)~\citep{delathauwer2000rank}.
Riemannian trust-region methods on the fixed-multi-linear-rank manifold
provide an alternative optimization
approach~\citep{heidel2018riemannian}.
\item \textbf{CANDECOMP/PARAFAC (CP)} writes a tensor as a sum of $R$ rank-one outer products and fits the factors by alternating least squares~\citep{carroll1970cp,harshman1970parafac}:
\[
  \mathcal X \;\approx\; \sum_{r=1}^{R} u_r^{(1)} \circ u_r^{(2)} \circ u_r^{(3)},
\]
where $\circ$ denotes the outer product.
\item \textbf{Tensor train (TT)} represents a tensor by a chain of third-order cores joined by bond ranks that control the dimensions between adjacent modes~\citep{oseledets2011tt}:
\[
  \mathcal X(i_1,i_2,i_3) \;\approx\; G_1(i_1)\, G_2(i_2)\, G_3(i_3),
  \quad G_k \in \mathbb R^{r_{k-1} \times n_k \times r_k},\; r_0 = r_3 = 1.
\]
\item \textbf{t-SVD} applies matrix SVDs to frontal slices in the discrete Fourier domain along the feature mode (mode $3$ under our ordering) and truncates the resulting tubal rank~\citep{kilmer2011tsvd,kilmer2013tubal}:
\[
  \mathcal X \;=\; \mathcal U * \mathcal S * \mathcal V^\top,
\]
with $*$ the tensor--tensor product in the DFT domain and $\mathcal S$ tubally diagonal with tubal rank $r$.
\end{itemize}

\subsection{Experimental Setup}
\label{sec:methodology}
\paragraph{Methodology}
All experiments use the fixed mode ordering
$(n_h,\, T,\, d_h)$ for three-mode tensors. When a fourth mode
is added, $L_g = 4$ adjacent layers are stacked into one group,
giving the ordering $(n_h,\, T,\, d_h,\, L_g)$. For t-SVD, the
feature mode serves as the transform mode.

\paragraph{Compression metric}  Storage is reported as a parameter-count ratio $C_{\mathrm{param}} = N_{\mathrm{original}}/N_{\mathrm{stored}}$ in all reconstruction sweeps.

\paragraph{Group-level error} When $L_g$ adjacent layers are
stacked into a group tensor $\mathcal{X}$, the layer slices
$X_1, \ldots, X_{L_g}$ occupy disjoint entries, so the squared
Frobenius norm is additive over the slice index.  The group-level
relative error is therefore the norm-weighted aggregation
\begin{equation*}
  E_{\mathrm{group}}
  \;=\;
  \sqrt{\,\sum_{\ell=1}^{L_g}
    \|X_\ell\|_F^2\, e_\ell^2
    \;\Big/\;
    \sum_{\ell=1}^{L_g} \|X_\ell\|_F^2\,},
\end{equation*}
where $e_\ell$ is the per-layer relative Frobenius error.  This
is the aggregation the Eckart--Young bound operates on, and it
reduces to an unweighted root-mean-square when all layers have
equal norm.

\paragraph{Storage functions}
For a third-order tensor of shape $n_h \times T \times d_h$,
the storage requirements of the four decompositions, measured
in scalar parameters, are
\begin{align*}
  S_{\mathrm{Tucker}}(r_1,r_2,r_3)
    &= r_1 r_2 r_3 + n_h r_1 + T r_2 + d_h r_3, \\
  S_{\mathrm{CP}}(R)
    &= R(n_h + T + d_h), \\
  S_{\mathrm{TT}}(r_1,r_2)
    &= n_h r_1 + r_1 T r_2 + r_2 d_h, \\
  S_{\mathrm{t\text{-}SVD}}(r)
    &= r(n_h + T)d_h.
\end{align*}

For real-valued input, conjugate symmetry halves the independent
Fourier slices, so the t-SVD storage reduces to $r(n_h + T)d_h$
real parameters.

The storage functions count the decomposition cores and
factor matrices. When a Tucker mode retains its full rank,
its factor can be chosen as the identity and need not be
stored; the corresponding term $n_h r_1$, $T r_2$, or
$d_h r_3$ is therefore omitted.

For the fourth-order tensor with $L_g$ adjacent layers,
Tucker storage extends to
\begin{equation*}
  S_{\mathrm{Tucker}}^{(4)}(r_1,r_2,r_3,r_L)
  = r_1 r_2 r_3 r_L
  + n_h r_1 + T r_2 + d_h r_3 + L_g r_L,
\end{equation*}
with the factor term of any full-rank mode omitted.
Residual storage and associated metadata are included
separately when computing the total storage in bytes.

\paragraph{Rank allocation}  Ranks are chosen per group and per tensor by a Lagrangian allocator that solves
\begin{align*}
  \min_{\mathbf r^K,\,\mathbf r^V} \quad & \|\mathcal{K} - \tilde{\mathcal{K}}(\mathbf r^K)\|_F^2 + \|\mathcal{V} - \tilde{\mathcal{V}}(\mathbf r^V)\|_F^2 \\
  \text{subject to} \quad & S(\mathbf r^K) + S(\mathbf r^V) \le B / C_{\mathrm{target}},
\end{align*}
where $B$ is the group's uncompressed parameter count and $C_{\mathrm{target}}$ the target compression ratio.  Keys and values therefore have independent rank vectors $\mathbf r^K, \mathbf r^V$.  The objective admits two budget accountings: \emph{per-tensor}, where each of $K$ and $V$ meets the storage target on its own, and \emph{joint}, where the pair meets it together and the allocator can trade key rank for value rank at matched joint storage.  The format sweep of Section~\ref{sec:formats} uses the per-tensor budget; the K/V-split experiment of Section~\ref{sec:unfoldings}, including the Palu-style and xKV methods, uses the joint budget.  Numbers under the two conventions are not directly comparable, and each table names the convention it uses.

\paragraph{Solvers}  Tucker is fitted by ST-HOSVD~\citep{vannieuwenhoven2012sthosvd}; unless stated otherwise, ten HOOI iterations follow, but they do not change the ST-HOSVD error on this data.  CP is fitted by alternating least squares~\citep{kossaifi2019tensorly} in Tensorly.  TT is fitted by TT-SVD~\citep{oseledets2011tt}; t-SVD is fitted by DFT-domain matrix SVDs.

\paragraph{Compute and inputs}  Caches are extracted in fp16 on an NVIDIA A100 (40\,GB); all decompositions are fitted in fp16.  Each reconstruction sweep aggregates three independent prompt draws and every layer of the model ($32$ for Mistral, $40$ for LLaMA); prompts are non-overlapping chunks of the WikiText-2 validation split.  Layers within a single prompt share text and are therefore not independent samples, which we take into account when interpreting aggregate statistics.

\section{Spectral Structure of KV Caches}
\label{sec:spectral}
The KV cache is deeply asymmetric in its mode sizes. In our
experiments $T = 1024$ and $d_h = 128$, both large, whereas
$n_h$ is only $8$ (Mistral, GQA) or $40$ (LLaMA-2-13B, MHA),
and $L_g = 4$. Since each per-head slice is a $T \times d_h$
matrix, its rank is bounded by $\min(T, d_h) = 128$, and
empirically every slice already uses all available directions;
the rank ceiling therefore says nothing about compressibility.
Any reduction must come from spectral decay within those
directions.

We measure where that decay lives. The cache of Mistral-7B
layer~$15$ is unfolded along each of its three axes, and the
singular values of every unfolding are computed. Keys are taken
pre-RoPE, a choice Section~\ref{sec:rope} justifies.
Table~\ref{tab:spectral} exhibits a clear \emph{mode-count dichotomy}: the head mode requires its
full rank at both error thresholds, while the token and feature
modes admit substantial rank reduction. To make this distinction
precise, we introduce the following definition.

\begin{definition}[Index-like mode]\label{def:index-like}
Let $\mathcal{X} \in \mathbb{R}^{n_1 \times \cdots \times n_d}$ be a real order-$d$ tensor, $k \in \{1, \ldots, d\}$ a mode of $\mathcal{X}$ with dimension $n_k$, and $\varepsilon > 0$ an error threshold.  Let $X_{(k)} \in \mathbb{R}^{n_k \times \prod_{j \ne k} n_j}$ denote the mode-$k$ unfolding of $\mathcal{X}$ and $\sigma_1(X_{(k)}) \ge \cdots \ge \sigma_{n_k}(X_{(k)})$ its singular values.  The \emph{normalised mode-$k$ tail energy at rank $r_k$} is
\begin{equation*}
  L_k(r_k) \;=\; \frac{\bigg(\sum_{j > r_k} \sigma_j^2(X_{(k)})\bigg)^{1/2}}{\|\mathcal{X}\|_F}.
\end{equation*}
We call mode $k$ \emph{index-like at level $\varepsilon$} for $\mathcal{X}$ if
\begin{equation*}
  L_k(n_k - 1) \;>\; \varepsilon,
\end{equation*}
that is, if the smallest singular value $\sigma_{n_k}(X_{(k)})$ of the unfolding alone already carries more than $\varepsilon^2$ share of the squared energy $\|\mathcal{X}\|_F^2$.  No low-rank approximation whose mode-$k$ rank is strictly below $n_k$ can then achieve relative Frobenius error $\varepsilon$.
\end{definition}

\begin{table}[htbp]
  \centering
  \small
  \caption{Per-mode spectral properties of the Mistral-7B
    layer-$15$ cache ($T=1024$), from the mode-$k$ SVD of each
    unfolding of the pre-RoPE key tensor $\mathcal{K}$ and the
    value tensor $\mathcal{V}$. Ranges are min-to-max over $5$
    independent draws. $L_k(n_k{-}1)$ is the tail energy from
    Definition~\ref{def:index-like}: values above $\varepsilon$
    indicate an index-like mode at level $\varepsilon$.}
  \label{tab:spectral}
  \begin{tabular}{llrrrr}
    \toprule
    Tensor & Mode & $\sigma_1/\sigma_{\min}$
      & $r$ for $\le 10\%$ & $r$ for $\le 20\%$
      & $L_k(n_k{-}1)$ \\
    \midrule
    K & heads ($8$)       & $1.4$
      & $8/8$      & $8/8$      & $0.275$ -- $0.298$ \\
    K & tokens ($1024$)   & $0.4$ -- $2.9$\,M
      & $225/1024$ & $89/1024$  & $3{\times}10^{-7}$ -- $2{\times}10^{-6}$ \\
    K & features ($128$)  & $28.2$
      & $100/128$  & $69/128$   & $0.012$ -- $0.013$ \\
    \midrule
    V & heads ($8$)       & $1.4$
      & $8/8$      & $8/8$      & $0.278$ -- $0.296$ \\
    V & tokens ($1024$)   & $13$ -- $550$\,K
      & $576/1024$ & $382/1024$ & $5{\times}10^{-7}$ -- $2{\times}10^{-5}$ \\
    V & features ($128$)  & $2.7$
      & $126/128$  & $118/128$  & $0.056$ -- $0.059$ \\
    \bottomrule
  \end{tabular}
\end{table}

With this terminology, the three modes partition as follows.

\begin{itemize}
\item \emph{Head mode ($n_h = 8$), index-like.}
  $L_k(7) \approx 0.28$ on both keys and values, well above
  any practical error threshold. Every head carries comparable
  energy, and no rank reduction is possible without at least
  $28\%$ relative error.

\item \emph{Token mode ($T = 1024$), compressible.}
  $L_k(1023)$ is below $10^{-5}$, so the last singular
  direction contributes negligible energy. Keys are
  substantially more compressible than values: the rank needed
  for $10\%$ error is $225$ for keys versus $576$ for values.
  Figure~\ref{fig:spectral-decay} (top row) shows the key
  spectrum dropping by several orders of magnitude, while the
  value spectrum decays more gradually.

\item \emph{Feature mode ($d_h = 128$), compressible on keys,
  index-like on values.}
  On keys, $L_k(127) \approx 0.012$, leaving room for
  substantial rank reduction. On values, $L_k(127) \approx 0.057$,
  and the rank needed for $10\%$ error is $126$ of $128$, so
  feature-mode compression is effective only for keys.
  Figure~\ref{fig:spectral-decay} (bottom row) confirms the
  contrast: the key spectrum falls steeply, whereas the value
  spectrum remains nearly flat.
\end{itemize}

\begin{figure}[htbp]
  \centering
  \includegraphics[width=0.85\textwidth]{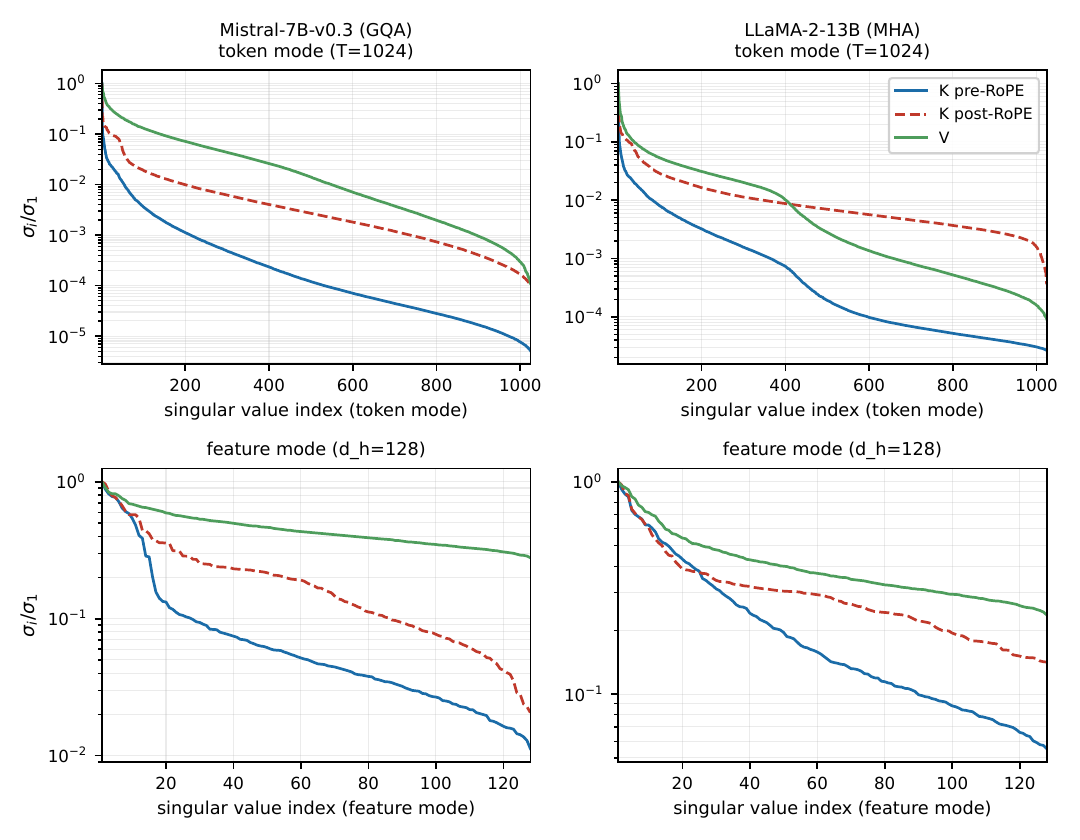}
  \caption{Normalized singular-value spectra $\sigma_i / \sigma_1$ of the token-mode (top row) and feature-mode (bottom row) unfoldings on layers 0--1 of Mistral-7B-v0.3 (GQA, left) and LLaMA-2-13B (MHA, right), $T=1024$; curves for pre-RoPE keys, post-RoPE keys, and values; vertical axis logarithmic.}
  \label{fig:spectral-decay}
\end{figure}

\section{Tensor-Format Comparison}
\label{sec:formats}

We now proceed to determine which of the four tensor decomposition formats is best suited to exploit the spectral
structure identified above. Tucker, CP, TT, and t-SVD are fitted
to the pre-RoPE keys and values of every layer at $T=1024$ over
three prompts, and the mean relative Frobenius error is
reported at each compression ratio. The experiment is conducted on Mistral 7B and Llama 2 13B models with the results reported in 
Tables~\ref{tab:formats-mistral} and~\ref{tab:formats-llama} respectively. CP is omitted on LLaMA because matched-storage
fitting requires CP rank $R \approx 2199$, which exceeds the
memory of the device used for experimentation.

\begin{table}[htbp]
  \centering
  \caption{Format comparison on Mistral-7B-v0.3 (GQA, $n_h=8$),
    \emph{per-tensor storage budget}: mean relative Frobenius
    error over all $32$ layers and $3$ prompts at $T=1024$.
    Lowest K and V errors per ratio in bold.}
  \label{tab:formats-mistral}
  \begin{tabular}{lrrrrrrrr}
    \toprule
    & \multicolumn{2}{c}{Tucker} & \multicolumn{2}{c}{CP}
    & \multicolumn{2}{c}{t-SVD} & \multicolumn{2}{c}{TT} \\
    Ratio & K & V & K & V & K & V & K & V \\
    \midrule
    $2\times$ & \textbf{0.087} & \textbf{0.279} & $0.098$ & $0.301$ & $0.147$ & $0.427$ & $0.224$ & $0.552$ \\
    $3\times$ & \textbf{0.125} & \textbf{0.381} & $0.155$ & $0.436$ & $0.204$ & $0.543$ & $0.303$ & $0.674$ \\
    $4\times$ & \textbf{0.153} & \textbf{0.451} & $0.194$ & $0.518$ & $0.243$ & $0.615$ & $0.349$ & $0.734$ \\
    $5\times$ & \textbf{0.176} & \textbf{0.503} & $0.224$ & $0.574$ & $0.268$ & $0.655$ & $0.387$ & $0.779$ \\
    \bottomrule
  \end{tabular}
\end{table}

\begin{table}[htbp]
  \centering
  \caption{Format comparison on LLaMA-2-13B (MHA, $n_h=40$),
    \emph{per-tensor storage budget}: mean relative Frobenius
    error over all $40$ layers and $3$ prompt draws at $T=1024$.
    CP omitted (see text). Lowest K and V errors per ratio in
    bold.}
  \label{tab:formats-llama}
  \begin{tabular}{lrrrrrr}
    \toprule
    & \multicolumn{2}{c}{Tucker} & \multicolumn{2}{c}{t-SVD}
    & \multicolumn{2}{c}{TT} \\
    Ratio & K & V & K & V & K & V \\
    \midrule
    $2\times$ & \textbf{0.089} & \textbf{0.232} & $0.167$ & $0.426$ & $0.278$ & $0.608$ \\
    $3\times$ & \textbf{0.127} & \textbf{0.328} & $0.222$ & $0.536$ & $0.373$ & $0.729$ \\
    $4\times$ & \textbf{0.153} & \textbf{0.392} & $0.258$ & $0.604$ & $0.434$ & $0.785$ \\
    $5\times$ & \textbf{0.173} & \textbf{0.437} & $0.279$ & $0.641$ & $0.495$ & $0.824$ \\
    \bottomrule
  \end{tabular}
\end{table}

From the tables, it is clear that Tucker achieves the lowest reconstruction error at every
compression ratio on both models. On Mistral the ordering is
Tucker $<$ CP $<$ t-SVD $<$ TT. On LLaMA it is
Tucker $<$ t-SVD $<$ TT. This ordering holds across all
individual layers and across seeds combination, not just in the aggregate.
Additionally, on this data when the Tucker allocator is free to choose all three mode
ranks, it selects the head and feature ranks at their full
counts ($r_1 = n_h = 8$, $r_3 = d_h = 128$) and truncates
only the token mode.

Both tables also show that value errors are consistently higher
than key errors across all formats, an asymmetry we further examine in
Section~\ref{sec:kv}.

The ordering reflects how each format handles the index-like
head mode. Tucker can leave it untouched and compress only the
token and feature axes. t-SVD applies a truncated SVD to
$n_h \times T$ frontal slices at every frequency, so head-mode
structure enters the truncation at each slice. TT threads a
chain through all three modes and pays a bond rank at each
link. Figure~\ref{fig:format-pareto} plots the error against
compression ratio for all formats on both models. Tucker
dominates across the full range, and the gap between formats
is larger on values than on keys, consistent with the harder
compressibility of the value spectrum noted in
Section~\ref{sec:spectral}.

\begin{figure}[htbp]
  \centering
  \includegraphics[width=0.9\textwidth]{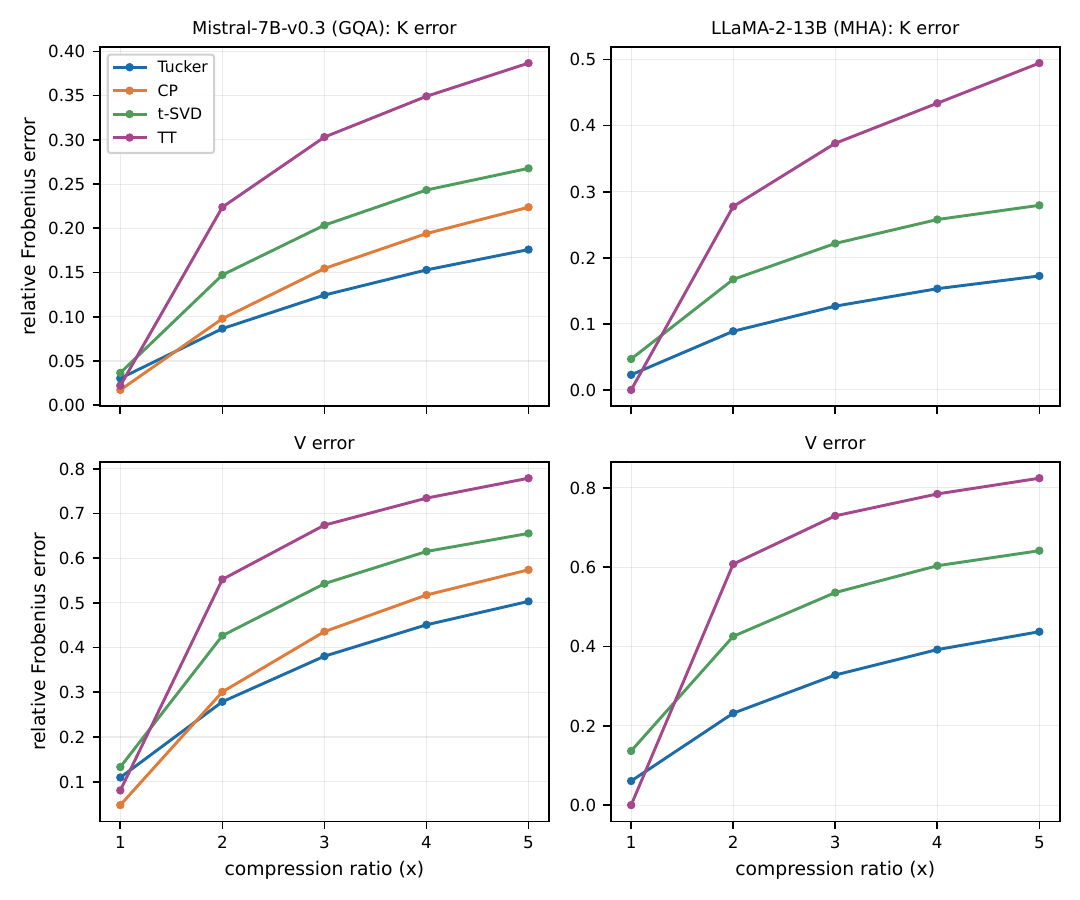}
  \caption{Reconstruction error against compression ratio for
    all evaluated formats on Mistral-7B-v0.3 and LLaMA-2-13B,
    keys and values, $T=1024$. Curves are the mean over all
    layers and three prompt draws.}
  \label{fig:format-pareto}
\end{figure}

\section{Higher-Order versus 2D Unfoldings}
\label{sec:unfoldings}

Among the evaluated third-order configurations, Tucker gives the lowest error over $2\times$--$5\times$ (Section~\ref{sec:formats}).  We next compare it against two-dimensional factorizations to test whether treating the KV cache as a third-order tensor improves reconstruction. The answer splits on the K/V boundary. On keys, the two-dimensional unfolding has lower error at every ratio tested. On values, full Tucker does. The split holds on both models.

\subsection{Per-head SVD}
\label{sec:perhead}

The simplest two-dimensional baseline is per-head SVD.  It treats each $(T, d_h)$ head slice as an independent matrix and truncates its singular values.  We compare it against Tucker with the head mode pinned at full rank ($r_1 = n_h$) and the token and feature ranks chosen by the Lagrangian allocator.  Both methods use matched storage, averaged over all $32$ Mistral layers and three prompt draws at $T = 1024$.  Table~\ref{tab:perhead} reports the results.

\begin{table}[htbp]
  \centering
  \caption{Tucker versus per-head SVD on Mistral-7B-v0.3 (GQA, $n_h=8$), all $32$ layers at $T=1024$, matched storage, $3$-seed mean.}
  \label{tab:perhead}
  \begin{tabular}{lrrrr}
    \toprule
    Ratio & K Tucker & K SVD & V Tucker & V SVD \\
    \midrule
    $2\times$ & $0.140$ & \textbf{0.139} & \textbf{0.155} & $0.421$ \\
    $3\times$ & \textbf{0.184} & $0.199$ & \textbf{0.251} & $0.534$ \\
    $4\times$ & \textbf{0.207} & $0.242$ & \textbf{0.325} & $0.604$ \\
    \bottomrule
  \end{tabular}
\end{table}

On values, Tucker has lower error at every ratio by a wide margin, from $0.155$ against $0.421$ at $2\times$ to $0.325$ against $0.604$ at $4\times$.  On keys, Tucker wins at $3\times$ and $4\times$.  At $2\times$ the two are within $0.002$, with per-head SVD marginally lower.

The gain does not come from compressing the head axis, which is pinned at full rank throughout.  It comes from shared bases.  Tucker fits a single token basis and a single feature basis shared across all heads,
so the factor matrices are stored once rather than once per head.  Per-head SVD fits
each slice independently and pays the full factor storage cost per head.  At matched storage,
this leaves Tucker with a larger rank budget in the token and feature modes.

\subsection{Cross-layer comparison: the K/V split}
\label{sec:kvsplit}
We now compare Tucker against two cross-layer two-dimensional
factorizations: the grouped low-rank decomposition of
Palu~\citep{chang2024palu} and the stacked-layer SVD of
xKV~\citep{chang2025xkv}.  The Palu-style baseline follows Palu's grouping of $4$ adjacent heads and unfolds the head mode into the feature mode to form a $(T,\, 4 \cdot d_h)$ matrix, but takes a truncated SVD of the cache itself, whereas Palu factors the key and value projection weights offline and caches the resulting latent. 
A single truncated SVD per group is taken at a rank
solved from the target compression ratio.  xKV unfolds the KV caches of $L_g = 4$ adjacent layers into
a single $(T,\, L_g \cdot n_h \cdot d_h)$ matrix and truncates
its SVD.  Tucker is tested in two variants: Tucker-3D takes one
layer at a time, and Tucker-4D stacks the same $L_g = 4$ layers
along a fourth mode, matching the xKV and PALU grouping for a controlled comparison, with ranks $(r_1, r_2, r_3, r_L)$.   All four
methods use a joint K/V storage budget, pre-RoPE keys,
and are averaged over three prompt draws at $T = 1024$.

\begin{table}[htbp]
  \centering
  \small
  \caption{Cross-layer K/V split, \emph{joint K/V storage budget}:
    Tucker (3D and 4D, fixed groups of $L_g=4$ layers) versus
    Palu and xKV,
    $T=1024$, $3$-seed mean.  }
  \label{tab:kvsplit}
  \footnotesize
  \begin{tabular}{llrrrrrrrr}
    \toprule
    & & \multicolumn{2}{c}{Palu} & \multicolumn{2}{c}{xKV}
      & \multicolumn{2}{c}{Tucker-3D} & \multicolumn{2}{c}{Tucker-4D} \\
    \cmidrule(lr){3-4}\cmidrule(lr){5-6}\cmidrule(lr){7-8}\cmidrule(lr){9-10}
    Model & Ratio & K & V & K & V & K & V & K & V \\
    \midrule
    Mistral-7B & $2\times$ & \textbf{0.096} & 0.312
      & 0.099 & 0.202 & 0.147 & 0.179
      & 0.134 & \textbf{0.143} \\
    & $3\times$ & 0.138 & 0.417
      & \textbf{0.133} & 0.301 & 0.190 & 0.282
      & 0.179 & \textbf{0.236} \\
    & $4\times$ & 0.168 & 0.486
      & \textbf{0.158} & 0.369 & 0.217 & 0.357
      & 0.201 & \textbf{0.308} \\
    \midrule
    LLaMA-2-13B & $2\times$ & 0.119 & 0.325
      & \textbf{0.113} & 0.186 & 0.142 & 0.138
      & 0.145 & \textbf{0.125} \\
    & $3\times$ & 0.162 & 0.423
      & \textbf{0.151} & 0.291 & 0.192 & 0.227
      & 0.196 & \textbf{0.218} \\
    & $4\times$ & 0.192 & 0.487
      & \textbf{0.176} & 0.359 & 0.215 & 0.297
      & 0.220 & \textbf{0.290} \\
    \bottomrule
  \end{tabular}
\end{table}
Table~\ref{tab:kvsplit} report the
results on Mistral ($32$ layers) and LLaMA ($40$ layers). The result is a clean K/V split on both models.  On keys, the 2D
methods win at every ratio, with xKV lowest from $3\times$ onward.
On values, Tucker-4D wins at every ratio, improving on Tucker-3D
throughout.  The split holds on LLaMA with the same structure.

The split is consistent with the spectra of
Section~\ref{sec:spectral}.  The key tensor has strong spectral
decay on both large-count modes, so a 2D factorization that
concentrates its entire budget on those modes captures the key
structure efficiently.  The value tensor has a nearly flat feature
spectrum, so 2D factorizations hit a higher floor.  Tucker avoids
this by sharing a compact basis across heads and assigning
independent ranks per mode.

\subsection{Head- and Layer-Rank Ablation}
\label{sec:rL}

In the K/V-split experiment, the four-mode allocator drove the
head and layer ranks to their full counts ($r_1 = n_h$,
$r_L = L_g$) on every group and ratio tested.  The question is
whether that choice is necessary.  We test this by forcing one
of the two small-count ranks below its full value and letting
the allocator redistribute the freed budget across the token
and feature modes at matched total storage.  Both models are
evaluated over three prompt draws at $2\times$ to $4\times$
in the same fixed-group setting ($L_g = 4$).

\begin{table}[htbp]
  \centering
  \caption{Head- and layer-rank ablation for Tucker-4D,
    $L_g=4$ fixed groups, $T=1024$, pre-RoPE, residual-free,
    $3$-seed mean.}
  \label{tab:mode-rank-ablation}
  \footnotesize
  \begin{tabular}{llrrrrrr}
    \toprule
    & & \multicolumn{2}{c}{$2\times$} & \multicolumn{2}{c}{$3\times$} & \multicolumn{2}{c}{$4\times$} \\
    \cmidrule(lr){3-4}\cmidrule(lr){5-6}\cmidrule(lr){7-8}
    Model & Arm & K & V & K & V & K & V \\
    \midrule
    \multirow{5}{*}{\shortstack[l]{Mistral\\($n_h=8$)}}
      & free ($r_h=8$, $r_L=4$) & $0.134$ & $0.143$ & $0.179$ & $0.236$ & $0.201$ & $0.308$ \\
      & $r_L=3$        & $0.393$ & $0.318$ & $0.417$ & $0.380$ & $0.432$ & $0.430$ \\
      & $r_L=2$        & $0.611$ & $0.524$ & $0.620$ & $0.553$ & $0.628$ & $0.581$ \\
      & $r_h=n_h/2=4$  & $0.667$ & $0.677$ & $0.670$ & $0.681$ & $0.674$ & $0.687$ \\
      & $r_h=n_h/4=2$  & $0.839$ & $0.847$ & $0.839$ & $0.847$ & $0.840$ & $0.847$ \\
    \midrule
    \multirow{5}{*}{\shortstack[l]{LLaMA\\($n_h=40$)}}
      & free ($r_h=40$, $r_L=4$) & $0.145$ & $0.125$ & $0.196$ & $0.218$ & $0.220$ & $0.290$ \\
      & $r_L=3$        & $0.417$ & $0.302$ & $0.441$ & $0.368$ & $0.457$ & $0.417$ \\
      & $r_L=2$        & $0.651$ & $0.529$ & $0.658$ & $0.550$ & $0.665$ & $0.578$ \\
      & $r_h=n_h/2=20$ & $0.594$ & $0.674$ & $0.598$ & $0.675$ & $0.603$ & $0.680$ \\
      & $r_h=n_h/4=10$ & --    & --    & --    & --    & $0.788$ & $0.843$ \\
    \bottomrule
  \end{tabular}
\end{table}

Table~\ref{tab:mode-rank-ablation} reports the results.
Truncating either the head rank or the layer rank below its full
count raises the reconstruction error sharply on both keys and
values, at every ratio, and on both models.  Even the mildest
constraint ($r_L = 3$, one rank below full) nearly triples the
key error on both models at $2\times$.  Halving the head rank
produces a comparable jump. On LLaMA, the $r_1 = n_h/4$ pin alone exceeds $3.3\times$
storage, so its $2\times$ and $3\times$ cells are not
matched-storage comparisons and are omitted from the table. A second pattern is visible across
the columns: the forced methods barely move as the compression
ratio relaxes from $2\times$ to $4\times$.  On Mistral, forcing
$r_1 = n_h/4$ gives key errors of $0.839$, $0.839$, and $0.840$
at the three ratios. A similar pattern can be observed for Llama as well. This implies that the forced rank rather than the storage
budget sets the error floor, and additional budget cannot
compensate for it. 

These error floors have a spectral explanation.  For any
approximation $\tilde{\mathcal{X}}$ whose mode-$k$ rank is at
most $r_k$, the Eckart--Young--Mirsky
theorem~\citep{eckart1936approximation,mirsky1960symmetric}
applied to the mode-$k$ unfolding gives
\begin{equation}
  L_k(r_k)\;=\;
  \frac{\left(\sum_{j > r_k}
    \sigma_j^2(X_{(k)})\right)^{1/2}}
  {\|\mathcal{X}\|_F}
  \;\le\;
  \frac{\|\mathcal{X} - \tilde{\mathcal{X}}\|_F}
  {\|\mathcal{X}\|_F},
  \label{eq:tail-bound}
\end{equation}
where $\sigma_j(X_{(k)})$ are the singular values of the
mode-$k$ unfolding.  The bound holds because unfolding preserves
the Frobenius norm, so
$\|\mathcal{X} - \tilde{\mathcal{X}}\|_F
= \|X_{(k)} - \tilde{X}_{(k)}\|_F$, and $\tilde{X}_{(k)}$ has
rank at most $r_k$.  When a mode has a flat spectrum, each additional rank captures
only a small share of the remaining energy, so $L_k(r_k)$
remains high until $r_k$ approaches $n_k$.  Since the bound
is format-agnostic, no algorithm can avoid this floor at a
given rank. 

\begin{table}[htbp]
  \centering
  \caption{Mode-$k$ tail-energy lower bound $L_k(r_k)$ versus
    measured Tucker-4D group-level error $E_{\mathrm{group}}$,
    Mistral, $T=1024$, target $2\times$, $3$-seed mean over
    $8$ groups.}
  \label{tab:bound-vs-measured}
  \begin{tabular}{llrrrrrr}
    \toprule
    Forced mode & $r_k$
      & \multicolumn{3}{c}{Keys}
      & \multicolumn{3}{c}{Values} \\
    \cmidrule(lr){3-5}\cmidrule(lr){6-8}
     & & $L_k(r_k)$ & $E_{\mathrm{group}}$ & Ratio
       & $L_k(r_k)$ & $E_{\mathrm{group}}$ & Ratio \\
    \midrule
    layer & $3$ & $0.460$ & $0.469$ & $1.02$ & $0.410$ & $0.420$ & $1.03$ \\
    layer & $2$ & $0.676$ & $0.677$ & $1.00$ & $0.621$ & $0.621$ & $1.00$ \\
    head  & $4$ & $0.666$ & $0.668$ & $1.00$ & $0.672$ & $0.672$ & $1.00$ \\
    head  & $2$ & $0.839$ & $0.839$ & $1.00$ & $0.843$ & $0.843$ & $1.00$ \\
    \bottomrule
  \end{tabular}
\end{table}

To test whether the ablation floors are near-optimal, we compare
the tail-energy bound $L_k(r_k)$ from~\eqref{eq:tail-bound} to
the measured group-level error $E_{\mathrm{group}}$ (defined in
Section~\ref{sec:methodology}) on Mistral at $2\times$.
Table~\ref{tab:bound-vs-measured} reports the comparison.  The
ratio $E_{\mathrm{group}}/L_k$ lies between $1.00$ and $1.03$
in every row, so the Tucker fit sits within a few percent of
the Eckart--Young lower bound of the truncated mode.
The bound is tightest when the forced rank is lowest ($r_k = 2$),
where the tail energy is large enough to dominate the total error
and the allocator has little room to affect the outcome. The ablation errors
therefore reflect the flat spectra of the head and layer modes
measured in Section~\ref{sec:spectral}, not a Tucker-specific
artifact.

\subsection{A spectral certificate for full-rank preservation}
\label{sec:certificate}

The free allocator in Section~\ref{sec:kvsplit} preserves the
head and grouped-layer ranks, $r_1=n_h$ and $r_L=L_g$, across
all groups and target ratios.  We now derive a sufficient
condition, checkable from the singular values of the mode-$k$ unfoldings referred to as mode spectra
alone, that
guarantees this choice at any given budget.

For a nonzero tensor
$\mathcal X\in\mathbb R^{n_1\times\cdots\times n_d}$, recall
the normalised mode-$k$ tail energy
\begin{equation*}
  L_k(r_k)^2
  =
  \frac{
    \sum_{i>r_k}\sigma_i^2(X_{(k)})
  }{
    \|\mathcal X\|_F^2
  }.
\end{equation*}

Throughout this subsection, $\mathbf{r} = (r_1, \ldots, r_d)
\in \mathcal{R}$ denotes the full multilinear rank vector
and $r_k$ its $k$-th component.

The allocator minimises the per-tensor tail-energy surrogate
\begin{equation*}
  f(\mathbf{r})=\sum_{k=1}^d L_k(r_k)^2,
  \qquad
  \mathbf{r}\in\mathcal R
  :=\prod_{k=1}^d\{1,\ldots,n_k\},
\end{equation*}
subject to the Tucker storage constraint
\begin{equation*}
  S(\mathbf{r})=\prod_{k=1}^d r_k
  +\sum_{k=1}^d n_k r_k\le B.
\end{equation*}

We derive a budget-dependent sufficient condition under which
every global minimiser preserves a specified mode at full rank.

Increasing $r_k$ by one requires
\begin{equation*}
  \Delta_k(\mathbf{r})
  =
  n_k+\prod_{j\ne k}r_j
\end{equation*}
additional storage units.  Let
$s(\mathbf{r})=B-S(\mathbf{r})$ denote the remaining budget.
An allocation with $r_k<n_k$ is called \emph{$k$-tight} if
\begin{equation*}
  0\le s(\mathbf{r})<\Delta_k(\mathbf{r}),
\end{equation*}
so that increasing $r_k$ alone would exceed the budget.

For a $k$-tight allocation, consider restoring one rank in
mode $k$ by reducing the rank of another mode $m\ne k$.  The
minimum number of mode-$m$ ranks required to fund this
increment is
\begin{equation}
  p_{k,m}(\mathbf{r})
  =
  \left\lceil
    \frac{\Delta_k(\mathbf{r})-s(\mathbf{r})}
    {n_m+(r_k+1)\prod_{j\ne k,m}r_j}
  \right\rceil.
  \label{eq:exchange-p}
\end{equation}

The corresponding increase in the donor-mode tail energy is
\begin{equation}
  c_{k,m}(\mathbf{r})
  =
  \frac{1}{\|\mathcal X\|_F^2}
  \sum_{i=r_m-p_{k,m}(\mathbf{r})+1}^{r_m}
    \sigma_i^2(X_{(m)}).
  \label{eq:exchange}
\end{equation}
We set $c_{k,m}(\mathbf{r})=\infty$ if
$p_{k,m}(\mathbf{r})\ge r_m$, since the exchange would
violate the minimum rank constraint.

Define
\begin{equation}
  \Gamma_k(B)
  =
  \max_{\substack{
    \mathbf{r}\in\mathcal R,\
    S(\mathbf{r})\le B\\
    r_k<n_k,\
    0\le s(\mathbf{r})<\Delta_k(\mathbf{r})
  }}
  \min_{m\ne k}c_{k,m}(\mathbf{r}).
  \label{eq:exchange-certificate}
\end{equation}
We set $\Gamma_k(B)=0$ if the maximisation set is empty.

Thus, $\Gamma_k(B)$ is the worst-case cost of the cheapest
feasible single-mode exchange over all $k$-tight allocations.
It depends only on the mode spectra, tensor dimensions, and
budget, and can be evaluated without knowing the optimal
allocation. The certificate
compares this worst-case exchange cost to the tail energy of
the mode to be preserved.

\begin{theorem}[Spectral certificate for full-rank preservation]
\label{thm:pinning}

Let $\mathcal X\in\mathbb R^{n_1\times\cdots\times n_d}$
be nonzero, with $d\ge2$, and suppose the feasible set
under budget $B$ is nonempty.

Let $k \in \{1,\ldots,d\}$ with $n_k\ge2$.  If
\begin{equation}
  L_k(n_k-1)^2>\Gamma_k(B),
  \label{eq:pinning-condition}
\end{equation}
then every global minimiser $\mathbf{r}^\star$ of
\begin{equation*}
  \min_{\mathbf{r}\in\mathcal R} f(\mathbf{r})
  \quad\text{subject to}\quad S(\mathbf{r})\le B
\end{equation*}
satisfies $r_k^\star=n_k$.

The condition is budget-dependent and cannot hold when
$\Gamma_k(B)=\infty$.
\end{theorem}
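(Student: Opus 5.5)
The plan is a proof by contradiction. Let $\mathbf r^\star$ be a global minimiser and assume $r_k^\star<n_k$. We then build a feasible $\mathbf r'$ with $f(\mathbf r')<f(\mathbf r^\star)$.

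Two facts from~\eqref{eq:pinning-condition} are used throughout. First, since $\Gamma_k(B)\ge 0$ (or the maximisation set is empty, giving $\Gamma_k(B)=0$), we get $\sigma_{n_k}^2(X_{(k)})=\|\mathcal X\|_F^2 L_k(n_k-1)^2>0$. Second, $\Gamma_k(B)<\infty$. Because singular values are ordered, raising $r_k$ by one from any $r_k<n_k$ lowers $L_k(r_k)^2$ by
\[
\sigma_{r_k+1}^2(X_{(k)})/\|\mathcal X\|_F^2\;\ge\;\sigma_{n_k}^2(X_{(k)})/\|\mathcal X\|_F^2\;=\;L_k(n_k-1)^2 .
\]
This is the gain we will trade against.

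\emph{Case 1: $\mathbf r^\star$ is not $k$-tight.} Feasibility gives $s(\mathbf r^\star)\ge 0$, so not being $k$-tight means $s(\mathbf r^\star)\ge\Delta_k(\mathbf r^\star)$. Set $\mathbf r'=\mathbf r^\star+e_k$. Its storage is $S(\mathbf r^\star)+\Delta_k(\mathbf r^\star)\le B$, so $\mathbf r'$ is feasible. The other summands of $f$ are unchanged, so $f$ drops by at least $L_k(n_k-1)^2>0$, a contradiction.

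\emph{Case 2: $\mathbf r^\star$ is $k$-tight.} Then $\mathbf r^\star$ lies in the maximisation set of~\eqref{eq:exchange-certificate}. So some donor $m\ne k$ satisfies $c_{k,m}(\mathbf r^\star)\le\Gamma_k(B)<\infty$. Finiteness of $c_{k,m}$ forces $p:=p_{k,m}(\mathbf r^\star)<r_m^\star$, so $\mathbf r'=\mathbf r^\star+e_k-p\,e_m\in\mathcal R$. Write $P=\prod_{j\ne k,m}r^\star_j$. A direct expansion of the product term $(r_k+1)(r_m-p)P-r_kr_mP$ together with the factor terms gives
\[
S(\mathbf r')-S(\mathbf r^\star)=\Delta_k(\mathbf r^\star)-p\bigl(n_m+(r_k^\star+1)P\bigr).
\]
By the ceiling in~\eqref{eq:exchange-p}, the right-hand side is at most $s(\mathbf r^\star)$, so $S(\mathbf r')\le B$. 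In $f$, the mode-$m$ term rises by exactly $c_{k,m}(\mathbf r^\star)$, as in~\eqref{eq:exchange}. The mode-$k$ term falls by at least $L_k(n_k-1)^2$, and all other terms are unchanged. Hence
\[
f(\mathbf r')-f(\mathbf r^\star)\le \Gamma_k(B)-L_k(n_k-1)^2<0,
\]
again a contradiction. Both cases are impossible, so $r_k^\star=n_k$. The closing remark is immediate: $L_k(n_k-1)^2$ is finite, so the strict inequality cannot hold when $\Gamma_k(B)=\infty$.

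The main point needing care is the storage bookkeeping in Case 2. One must check that the denominator in~\eqref{eq:exchange-p} is exactly the storage freed per donor rank once $r_k$ has already been incremented. One must also check that the definition of $\Gamma_k(B)$ applies at the unknown minimiser. It does, because $\Gamma_k(B)$ is a maximum over all $k$-tight feasible allocations, which include $\mathbf r^\star$. The remaining steps are routine monotonicity of singular values.
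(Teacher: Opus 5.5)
Your proposal is correct and follows the paper's proof essentially step for step. Both argue by contradiction on a minimiser with $r_k^\star<n_k$, dispose of the non-$k$-tight case by a free increment of $r_k$, and handle the $k$-tight case with the cheapest donor exchange certified by $\Gamma_k(B)$, using the same storage identity and the same objective comparison. Your explicit checks fill in details the paper leaves implicit: that $\Gamma_k(B)\ge 0$, so the gain $L_k(n_k-1)^2$ is strictly positive, and that the product-term expansion gives exactly $\Delta_k(\mathbf r^\star)-p\bigl(n_m+(r_k^\star+1)P\bigr)$.
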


\begin{proof}
Fix a mode $k$ satisfying~\eqref{eq:pinning-condition}.
Suppose, for contradiction, that there exists a global
minimizer $\mathbf{r}^\star$ with $r_k^\star < n_k$.

Since the singular values are non increasing, increasing
$r_k^\star$ by one decreases the objective by at least
\begin{equation*}
  \frac{
    \sigma_{r_k^\star+1}^2(X_{(k)})
  }{
    \|\mathcal X\|_F^2
  }
  \ge L_k(n_k-1)^2>0.
\end{equation*}

If $s(\mathbf{r}^\star)\ge\Delta_k(\mathbf{r}^\star)$, this
increment is feasible without modifying any other rank and
strictly decreases $f$, contradicting optimality.

Hence $\mathbf{r}^\star$ must be $k$-tight.  By the definition
of $\Gamma_k(B)$ and the hypothesis,
\begin{equation*}
  \begin{aligned}
    \min_{m\ne k}c_{k,m}(\mathbf{r}^\star)
    &\le \Gamma_k(B) < L_k(n_k-1)^2
  \end{aligned}
\end{equation*}

Choose a minimising partner $m$ and let
$p=p_{k,m}(\mathbf{r}^\star)$.  The exchange cost is finite,
so $p<r_m^\star$.  Define
\begin{equation*}
  r'_k=r_k^\star+1,\qquad
  r'_m=r_m^\star-p,\qquad
  r'_j=r_j^\star\quad(j\notin\{k,m\}).
\end{equation*}

By the definition of $p$,
\begin{align*}
  S(\mathbf{r}')-S(\mathbf{r}^\star)
  &=
  \Delta_k(\mathbf{r}^\star)
  -
  p\left(
    n_m+(r_k^\star+1)
    \prod_{j\ne k,m}r_j^\star
  \right)\\
  &\le s(\mathbf{r}^\star).
\end{align*}
Therefore $S(\mathbf{r}')\le B$, and the exchange is feasible.

Its objective change is
\begin{align*}
  f(\mathbf{r}')-f(\mathbf{r}^\star)
  &=
  c_{k,m}(\mathbf{r}^\star)
  -
  \frac{
    \sigma_{r_k^\star+1}^2(X_{(k)})
  }{
    \|\mathcal X\|_F^2
  }\\
  &\le
  \Gamma_k(B)-L_k(n_k-1)^2\\
  &<0.
\end{align*}

This contradicts the global optimality of $\mathbf{r}^\star$.
Consequently, every global minimiser satisfies
$r_k^\star=n_k$.
\end{proof}

When the certificate preserves all but one mode at full rank,
the gap between the tail-energy surrogate and the true
reconstruction error can be controlled.

\begin{corollary}[Reconstruction-error tightness under certified
rank preservation]
\label{cor:tightness}

Let $\mathbf{r}^\star$ be a global minimiser of the allocation
problem, and let $\mathcal C$ be a set of modes satisfying the
condition of Theorem~\ref{thm:pinning}.

Let $\widehat{\mathcal X}$ be a truncated HOSVD or ST-HOSVD
approximation at multilinear rank $\mathbf{r}^\star$, with
relative reconstruction error
\begin{equation*}
  e=
  \frac{
    \|\mathcal X-\widehat{\mathcal X}\|_F
  }{
    \|\mathcal X\|_F
  }.
\end{equation*}

For any mode $k\notin\mathcal C$ with $L_k(r_k^\star)>0$,
define
\begin{equation*}
  \delta_k
  =
  \frac{
    \sum_{\substack{
      j\notin\mathcal C\\j\ne k
    }}L_j(r_j^\star)^2
  }{
    L_k(r_k^\star)^2
  }.
\end{equation*}

Then
\begin{equation}
  L_k(r_k^\star)
  \le e
  \le L_k(r_k^\star)\sqrt{1+\delta_k}.
  \label{eq:certified-tightness}
\end{equation}

In particular, if all modes other than $k$ are certified
to retain full rank, then
\begin{equation}
  e^2=f(\mathbf{r}^\star)=L_k(r_k^\star)^2.
  \label{eq:surrogate-exactness}
\end{equation}

The same conclusions hold for a rank-preserving,
non-error-increasing refinement initialised from either
HOSVD variant.

\end{corollary}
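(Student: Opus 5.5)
The lower bound and the upper bound need different tools. The lower bound $L_k(r_k^\star)\le e$ is exactly the Eckart--Young tail bound~\eqref{eq:tail-bound}. It applies because $\widehat{\mathcal X}$, being a truncated HOSVD or ST-HOSVD at multilinear rank $\mathbf r^\star$, has a mode-$k$ unfolding of rank at most $r_k^\star$. For the upper bound I will use the standard quasi-optimality estimate for truncated HOSVD and ST-HOSVD (De~Lathauwer et al.; Vannieuwenhoven et al.):
\[
  \|\mathcal X-\widehat{\mathcal X}\|_F^2\le\sum_{j=1}^d\sum_{i>r_j^\star}\sigma_i^2(X_{(j)}).
\]
After dividing by $\|\mathcal X\|_F^2$ this reads $e^2\le f(\mathbf r^\star)=\sum_j L_j(r_j^\star)^2$. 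For HOSVD I would recall the argument: the projections $P_j=U^{(j)}U^{(j)\top}$ commute as mode products, so the residual $\mathcal X-\mathcal X\times_1P_1\cdots\times_dP_d$ telescopes into a sum of mutually orthogonal pieces. Each piece has norm at most $\|\mathcal X\times_j(I-P_j)\|_F$, which equals the mode-$j$ tail. For ST-HOSVD the successive projections are orthogonal, and each step's truncation error is bounded by the corresponding tail of the original unfolding. I regard this as the main obstacle, but it is a cited standard result rather than something new to prove.

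The next step uses Theorem~\ref{thm:pinning}. Every $j\in\mathcal C$ satisfies its hypothesis, so every global minimiser has $r_j^\star=n_j$ for these modes. Hence $L_j(r_j^\star)=L_j(n_j)=0$, since the tail beyond full rank is empty. The surrogate therefore collapses to $f(\mathbf r^\star)=L_k(r_k^\star)^2+\sum_{j\notin\mathcal C,\,j\ne k}L_j(r_j^\star)^2=L_k(r_k^\star)^2(1+\delta_k)$, where dividing by $L_k(r_k^\star)^2$ is legitimate because $L_k(r_k^\star)>0$. Taking square roots in $e^2\le f(\mathbf r^\star)$ gives~\eqref{eq:certified-tightness}.

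For~\eqref{eq:surrogate-exactness}: if every mode other than $k$ is certified, then $\delta_k=0$, so the two bounds coincide and $e^2=L_k(r_k^\star)^2=f(\mathbf r^\star)$. The case $L_k(r_k^\star)=0$ is degenerate; there $e=0=f$, since the upper bound vanishes. Finally, for a rank-preserving refinement such as HOOI that does not increase the error, the refined error $e'$ satisfies $e'\le e$, which keeps the upper bound. The multilinear rank is still at most $\mathbf r^\star$, so the Eckart--Young lower bound still applies. Both inequalities therefore persist.
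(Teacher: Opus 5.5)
Your proposal is correct and follows the paper's proof essentially step for step. It uses Eckart--Young on the mode-$k$ unfolding for the lower bound and the cited truncated HOSVD/ST-HOSVD bound $e^2\le\sum_j L_j(r_j^\star)^2$ for the upper bound. It applies Theorem~\ref{thm:pinning} to make the certified tails vanish, and uses the same monotonicity argument for the rank-preserving refinement; your sketch of why the HOSVD and ST-HOSVD bounds hold (orthogonal telescoping of the residual, and projected unfoldings having dominated singular values) is a correct supplement to what the paper only cites.
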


\begin{proof}
By Theorem~\ref{thm:pinning}, every certified mode
$j\in\mathcal C$ retains full rank, and hence
\begin{equation*}
  L_j(r_j^\star)=0
  \qquad (j\in\mathcal C).
\end{equation*}

The Eckart--Young--Mirsky
theorem~\citep{eckart1936approximation,mirsky1960symmetric}
applied to each mode unfolding gives the lower bound, and the
standard reconstruction-error bounds for truncated
HOSVD~\citep[Property~10]{delathauwer2000hosvd} and
ST-HOSVD~\citep[Theorem~6.5]{vannieuwenhoven2012sthosvd} give
the upper bound in
\begin{equation*}
  \max_j L_j(r_j^\star)^2
  \le e^2
  \le \sum_j L_j(r_j^\star)^2.
\end{equation*}

Since the certified modes contribute no tail energy,
\begin{equation*}
  L_k(r_k^\star)^2
  \le e^2
  \le
  L_k(r_k^\star)^2
  +
  \sum_{\substack{
    j\notin\mathcal C\\j\ne k
  }}L_j(r_j^\star)^2.
\end{equation*}

Factoring out $L_k(r_k^\star)^2$ and taking square roots
establishes~\eqref{eq:certified-tightness}.

If every mode other than $k$ is certified, then
$\delta_k=0$, so the lower and upper bounds coincide.
Moreover, all other terms in the surrogate vanish, giving
\begin{equation*}
  e^2=L_k(r_k^\star)^2=f(\mathbf{r}^\star).
\end{equation*}

Finally, consider a refinement (such as HOOI) initialised
from either HOSVD variant that preserves all mode ranks and
does not increase the reconstruction error.  Since the mode
ranks are unchanged, the lower bound $L_k(r_k^\star) \le e$
still holds.  Since the refinement can only decrease the
error from its initialisation, the HOSVD upper bound still
applies.  The
bounds defined by ~\eqref{eq:certified-tightness} therefore carries
over.
\end{proof}

\paragraph{Numerical verification}

We evaluate the certificate for Mistral's head mode using
$p_{k,m}$ from~\eqref{eq:exchange-p} and $c_{k,m}$
from~\eqref{eq:exchange}.  Across four prompt draws, five
layers, keys and values, and target ratios of
$2\times$--$5\times$, all 160 configurations satisfy
$L_1(n_h-1)^2 > \Gamma_1(B)$, so the certificate guarantees
full head rank at every tested budget.  The ratio
$L_1(n_h-1)^2/\Gamma_1(B)$ ranges from $3.49$ to $342.33$,
with a median of $13.44$.

Table~\ref{tab:bound-vs-measured} confirms the tightness
predicted by Corollary~\ref{cor:tightness}: the ratio
$E_{\mathrm{group}}/L_k$ stays between $1.00$ and $1.03$
in every row, consistent with the
bound~\eqref{eq:certified-tightness}.

The certificate also holds on all 224 head-mode
configurations of LLaMA-2-13B.  On the grouped-layer mode
($L_g=4$), it holds on 240 of 256 Mistral cells and 305 of
320 LLaMA cells.  The uncertified cells are the value tensor
of the first group of each model, yet the allocator keeps
$r_L = L_g$ there as well.  The condition of
Theorem~\ref{thm:pinning} is therefore sufficient but not
necessary.

\section{Structural Properties}
\label{sec:structural}

The format comparison of Section~\ref{sec:formats} establishes that
Tucker decomposition achieves the lowest reconstruction error at every
compression ratio. Two further properties of the cache, independent of
the decomposition format, affect its low-rank approximability: the
asymmetry between keys and values, and the interaction between
compression and rotary position encoding.

\subsection{The K/V asymmetry}
\label{sec:kv}

Every experiment so far shows the same pattern: at matched compression
ratio, values incur consistently higher reconstruction error than
keys. The spectral analysis of Section~\ref{sec:spectral} shows that
value-token unfoldings require roughly twice the rank of key-token
unfoldings for the same error threshold. The format comparison of
Section~\ref{sec:formats} confirms that the gap is not an artifact of
any single decomposition. Pooled over all formats, layers, and prompt
draws, the median ratio of value error to key error is approximately
$2.4$ and lies in the range $[1.7,\, 3.2]$ across all model--format--ratio
combinations. The asymmetry is a property of the cache spectrum, not of
the algorithm used to approximate it.

A natural explanation follows from the different roles of the two
weight matrices introduced in Section~\ref{sec:background}. The
matrix $W_V$ produces the value cache that are linearly combined
to form the attention output. To preserve content under arbitrary
weightings, $W_V$ must spread energy across many feature directions,
which leads to a flat singular-value spectrum and poor low-rank
approximability. The matrix $W_K$ produces the key cache whose
sole purpose is to determine softmax scores via inner products with
queries(Section~\ref{sec:background}). Concentrating energy into a small number of discriminative
directions suffices for this purpose, which leads to rapid spectral
decay and good low-rank approximability. The singular values of the
learned weight matrices confirm this account: on both models, the
spectrum of $W_K$ drops steeply from its leading singular value
($\sigma_1/\sigma_{\min} \approx 36$ on Mistral), while the spectrum
of $W_V$ remains comparatively flat ($\sigma_1/\sigma_{\min} \approx 6$).
Correspondingly, $W_V$ requires higher rank than $W_K$ to capture
$90\%$ of its energy on nearly every layer.

The K/V asymmetry is a spectral property of the value tensor: values
occupy the same compressible token and feature axes as keys, but with
a flatter spectrum. Any rank allocator must therefore choose
independent ranks for keys and values rather than compressing both at
the same rank.

\section{Rotary Position Embedding and the Pre/Post Penalty}
\label{sec:rope}

The RoPE operator $R$ defined in Section~\ref{sec:background} rotates
each key and query vector by a position-dependent angle. Because $R$
applies an independent orthogonal rotation $R_t$ at each token
position, the full operator is orthogonal on the vectorized tensor
and preserves the Frobenius norm of any error:
$\|R(\mathcal{K}) - R(\tilde{\mathcal{K}})\|_F
= \|\mathcal{K} - \tilde{\mathcal{K}}\|_F$. Rotation therefore does not
change the distance between a tensor and a fixed approximation. It does,
however, change which approximations are efficient at a given rank.
Because $R$ mixes coordinates within each pair at a position-dependent
angle, it can flatten the token-mode and feature-mode spectra and reduce
the tensor's low-rank approximability. Keys can therefore be compressed
before the rotation (pre-RoPE) or after (post-RoPE), and the two
settings need not yield the same error at matched storage.

\begin{table}[htbp]
  \centering
  \caption{Cross-architecture RoPE penalty on keys, Mistral-7B-v0.3 and LLaMA-2-13B, $T=1024$; gap is the relative increase in pooled key error from pre to post.}
  \label{tab:rope-cross}
  \begin{tabular}{lrrrrrr}
    \toprule
    Ratio & Mistral pre & Mistral post & gap\% & LLaMA pre & LLaMA post & gap\% \\
    \midrule
    $2\times$ & $0.1307$ & $0.2117$ & $+62\%$ & $0.1413$ & $0.2311$ & $+64\%$ \\
    $3\times$ & $0.1763$ & $0.2687$ & $+52\%$ & $0.1922$ & $0.2906$ & $+51\%$ \\
    $4\times$ & $0.1982$ & $0.2957$ & $+49\%$ & $0.2171$ & $0.3182$ & $+47\%$ \\
    $6\times$ & $0.2257$ & $0.3335$ & $+48\%$ & $0.2403$ & $0.3440$ & $+43\%$ \\
    $8\times$ & $0.2433$ & $0.3626$ & $+49\%$ & $0.2545$ & $0.3598$ & $+41\%$ \\
    \bottomrule
  \end{tabular}
\end{table}

Table~\ref{tab:rope-cross} compares pre- and post-RoPE key errors on
both models across five compression ratios. Post-RoPE error is
consistently higher, with relative gaps ranging from $41\%$ to $64\%$.
A frozen-allocation control, in which the post-RoPE pass reuses the
pre-RoPE ranks verbatim, produces even larger gaps. The allocator
therefore compensates for the spectral flattening rather than causing it,
and the penalty is attributable to the rotation alone.

\begin{figure}[htbp]
  \centering
  \includegraphics[width=0.85\textwidth]{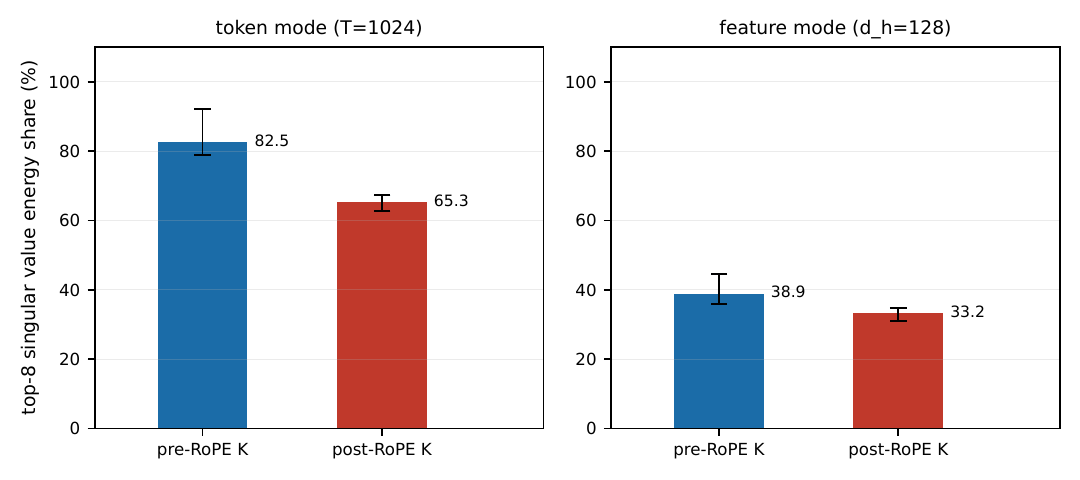}
  \caption{Top-$8$ singular-value energy shares of the token-mode and
  feature-mode unfoldings of the key tensor on Mistral-7B-v0.3, before
  and after RoPE. Bars show the mean over all $8$ layer groups
  ($L_g = 4$). Error bars show the range across groups.}
  \label{fig:rope-energy}
\end{figure}

Figure~\ref{fig:rope-energy} shows where the spectral flattening
occurs, averaged over all layer groups of Mistral. The top-$8$
token-mode energy share drops from $82.5\%$ to $65.3\%$, and the
top-$8$ feature-mode share drops from $38.9\%$ to $33.2\%$. Both
drops are consistent across groups, as the error bars confirm. The
token-mode effect is larger because RoPE mixes coordinates at
position-dependent angles that vary across the full sequence length,
spreading energy away from the leading token-mode components.
Post-RoPE, the key feature spectrum flattens, and the compression advantage that keys hold over values in
the feature mode shrinks.

\section{Conclusions}
\label{sec:conclusion}

The KV cache divides into two spectral classes.
The token and feature modes carry low-rank structure that admits rank reduction,
whereas the head and grouped-layer modes are index-like
(Definition~\ref{def:index-like}).  Theorem~\ref{thm:pinning} certifies
this partition from the mode spectra alone, and holds on every tested head-mode
configuration of both models at target ratios $2{\times}$--$5{\times}$.  Among the
four decompositions, Tucker achieves the lowest reconstruction error at every
ratio, because it can leave the index-like modes untouched and concentrate its
budget on the compressible ones.

Two further spectral properties affect the compressible modes
without touching the index-like ones.  Values reach a higher
reconstruction-error floor than keys at matched storage, an asymmetry traced to
the flatter spectrum of the value projection~$W_V$.  Post-RoPE keys lose
$41\%$--$64\%$ of their pre-RoPE compressibility on both models, because the
position-dependent rotation flattens the token- and feature-mode spectra.
Keys and values therefore need independent rank budgets, and keys
should be compressed before the rotary embedding.

All reconstruction sweeps use a single sequence length ($T=1024$),
and the spectral analysis covers two models.  CP could not be fitted on the
LLaMA tensor within the tested hardware.  Whether the token-mode
structure measured on a filled prompt persists during incremental generation
is an open question.  The companion paper
JoLT~\citep{krishnan2026jolt} operationalizes the constraints above into
a partial-Tucker compressor with a rotated low-bit residual.

\section*{Reproducibility Statement}

The authors have followed reproducibility principles valued by the scientific computing community.  Code and data that allow readers to reproduce the results in this paper are available at \url{https://github.com/rahulk98/JoLT-Master-Thesis}.

\section*{AI Use Statement}

The authors used generative AI tools in the following way:
LLM-based coding assistants were used to assist with software implementation; generative AI tools were used to assist with manuscript drafting, and AI-assisted search was used to retrieve related work and to check citation records against publisher metadata.  All AI-assisted code and text were reviewed and verified by the authors. The authors take full responsibility for the final content of the paper.

\bibliographystyle{unsrtnat}
\bibliography{references}

\end{document}